\documentclass[11pt]{article}

\usepackage[utf8]{inputenc}
\usepackage[T1]{fontenc}
\usepackage{lmodern}
\usepackage{amsmath,amssymb,amsthm,mathtools}
\usepackage[numbers,sort&compress]{natbib}
\usepackage{booktabs}
\usepackage{microtype}
\usepackage{geometry}
\usepackage{xcolor}

\usepackage{hyperref}

\providecommand{\doi}[1]{%
  \href{https://doi.org/#1}{\nolinkurl{doi:#1}}%
}

\usepackage[nameinlink,capitalize,noabbrev]{cleveref}

\hypersetup{
  colorlinks=true,
  linkcolor=blue!45!black,
  citecolor=blue!45!black,
  urlcolor=blue!55!black,
pdftitle={A proof of the maximum Laplacian energy conjecture for connected graphs via a sharp eigenvalue-sum bound},
  pdfauthor={Seyed Ahmad Mojallal}
}

\newtheorem{theorem}{Theorem}[section]
\newtheorem{lemma}[theorem]{Lemma}
\newtheorem{conjecture}[theorem]{Conjecture}
\newtheorem{proposition}[theorem]{Proposition}
\newtheorem{corollary}[theorem]{Corollary}
\theoremstyle{definition}

\theoremstyle{remark}
\newtheorem{remark}[theorem]{Remark}

\numberwithin{equation}{section}

\newcommand{\LE}{\operatorname{LE}}
\newcommand{\PA}{\operatorname{PA}}
\newcommand{\Lspec}{\operatorname{Lspec}}
\newcommand{\dd}{\bar d}

\begin{document}

\title{A proof of the maximum Laplacian energy conjecture for connected
graphs via a sharp eigenvalue-sum bound}

\author{Seyed Ahmad Mojallal}
\date{}

\maketitle

\begin{abstract}
Let \(S_k(G)\) denote the sum of the \(k\) largest Laplacian eigenvalues
of a connected graph \(G\) of order \(n\) and size \(m\).  
Write \(\PA_{n,\omega}\) for the graph obtained from an
\(\omega\)-vertex clique by attaching \(n-\omega\) pendant vertices to
one of its vertices, and set
\[
  M_{n,k}:=\binom{k+1}{2}+n-k-1,
\]
the number of edges of \(\PA_{n,k+1}\).  For \(n/2<k\le n-2\), we prove
the sharp bound
\[
  S_k(G)\le
  \frac{2k}{n}m+
  \frac{2(n-k)}{n}M_{n,k}-(n-k-1),
\]
with equality attained by \(\PA_{n,k+1}\).  This bound is complementary
to Brouwer's inequality and is strictly stronger when \(m<M_{n,k}\).
Combining our bound with Brouwer's inequality, we resolve and strengthen
a conjecture of Vinagre, Del-Vecchio, Justo, and Trevisan: for every
\(n\), the pineapple \(\PA_{n,\,1+\lfloor2n/3\rfloor}\) maximizes the
Laplacian energy among all connected graphs of order \(n\); moreover,
for \(n>4\), it is the unique maximizer.
\end{abstract}

\medskip
\noindent\textbf{Keywords:}
Laplacian energy, Laplacian eigenvalue sum, Brouwer's inequality, pineapple graph, threshold graph

\smallskip
\noindent\textbf{2020 Mathematics Subject Classification:}
05C50,  05C35, 15A18
\medskip

\section{Introduction}\label{sec:introduction}

Let \(G\) be a finite simple graph with \(n\) vertices, \(m\) edges, and
Laplacian matrix \(L(G)\).  If
\[
  \mu_1(G)\ge \mu_2(G)\ge\cdots\ge\mu_n(G)=0
\]
are its Laplacian eigenvalues, then the \emph{Laplacian energy} of \(G\) is
\begin{equation}\label{eq:LE-definition}
  \LE(G)=\sum_{i=1}^{n}\left|\mu_i(G)-\frac{2m}{n}\right|.
\end{equation}
Gutman and Zhou introduced this invariant as a Laplacian analogue of graph
energy \cite{GutmanZhou2006}.  It is also the trace norm of the centered
Laplacian \(L(G)-(2m/n)I\).  Thus \(\LE(G)\) records the total spectral
distribution about the average degree of the graph.  Determining its extremal behavior is
therefore a natural meeting point of spectral graph theory, extremal graph
theory, and matrix inequalities.

Maximizing Laplacian energy among connected graphs of fixed order is
delicate, since adding an edge changes both the Laplacian spectrum and
the average degree \(2m/n\).  Consequently, Laplacian energy is not
monotone under edge addition.  The conjectured maximizer is a pineapple
graph, which combines a complete subgraph with a collection of pendant
vertices attached to a single clique vertex.

For integers \(2\le\omega\le n\), let
\(\PA_{n,\omega}\) denote the \emph{pineapple graph} obtained from the complete graph 
\(K_\omega\) by attaching all \(n-\omega\) pendant vertices to one vertex
of the clique.  Vinagre, Del-Vecchio, Justo, and Trevisan
\cite{VinagreEtAl2013} proposed the following conjecture.

\begin{conjecture}
\label{conj:pineapple}
For every connected graph \(G\) of order \(n\),
\begin{equation}\label{eq:pineapple-conjecture}
  \LE(G)\le
  \LE\!\left(\PA_{n,\,1+\lfloor 2n/3\rfloor}\right).
\end{equation}
\end{conjecture}

Threshold graphs supplied both the origin and the strongest early
evidence for this conjecture.  Such graphs may be constructed from one
vertex by repeatedly adding either an isolated vertex or a vertex adjacent
to all earlier vertices.  Their Laplacian spectra equal their conjugate
degree sequences \cite{Merris1994,HammerKelmans1996}, which makes spectral
optimization unusually explicit.  Vinagre et al.\ proved the pineapple
optimal in several substantial threshold families and among all
pineapples \cite{VinagreEtAl2013}.  Helmberg and Trevisan subsequently
determined the maximum among all connected threshold graphs
\cite{HelmbergTrevisan2015}.  Das and Mojallal gave a further extremal
analysis, including the second-largest values in that class
\cite{DasMojallal2016}.  These results settle the extremal problem within the class of connected
threshold graphs, but do not show that an arbitrary connected graph is
dominated in Laplacian energy by a connected threshold graph.

The requirement that the comparison graph remain connected is crucial.
Indeed, when disconnected threshold graphs are allowed, the
Laplacian-energy maximizer is different and need not be connected
\cite{HelmbergTrevisan2015}.  Helmberg and Trevisan subsequently
introduced \emph{spectral threshold dominance}.
For \(1\le k\le n\), write
\[
  S_k(G)=\sum_{i=1}^{k}\mu_i(G).
\]
A graph \(G\) of order \(n\) and size \(m\) is called
\emph{spectrally threshold dominated} if, for every
\(k\in\{1,\ldots,n\}\), there exists a threshold graph \(T_k\) of
order \(n\) and size \(m\) such that
\[
  S_k(G)\le S_k(T_k).
\]
The comparison graph \(T_k\) may depend on \(k\).
Helmberg and Trevisan proved that every
spectrally threshold-dominated graph \(G\) admits a threshold graph of
the same order and size whose Laplacian energy is at least
\(\LE(G)\) \cite{HelmbergTrevisan2017}.  They further showed that
universal spectral threshold dominance is equivalent to Brouwer's
conjecture for Laplacian eigenvalue sums.  However, even when \(G\) is
connected, the resulting threshold graph need not be connected.
Consequently, spectral threshold dominance does not resolve the
connected extremal problem and, in particular, does not imply
\eqref{eq:pineapple-conjecture}.

Brouwer's inequality is
\begin{equation}\label{eq:Brouwer-intro}
  S_k(G)\le m+\binom{k+1}{2}
  \qquad (1\le k\le n).
\end{equation}
The inequality emerged from work on partial Laplacian spectral sums
\cite{HaemersEtAl2010,BrouwerHaemers2012} and remained open after many
partial results; see, for example, \cite{GanieEtAl2020}.  Recently,
Kothari and Tudose proved Brouwer's inequality using the
Grone--Merris--Bai majorization theorem
\cite{Bai2011,KothariTudose2026}.  Their theorem is the principal
external input used in our proof.

At first sight, \eqref{eq:Brouwer-intro} appears strong enough for
\eqref{eq:pineapple-conjecture}.  The obstruction is numerical rather than
formal.  Laplacian energy is determined by \(S_k-2mk/n\), where \(k\) is
the number of eigenvalues above \(2m/n\). 

When \(k>n/2\), the estimate obtained from Brouwer's inequality after
subtracting \(2km/n\) is decreasing in \(m\), and is therefore
insufficient below the pineapple edge count.  This suggests seeking a
complementary bound whose coefficient of \(m\) is \(2k/n\), so that the
dependence on \(m\) disappears in the corresponding Laplacian-energy
estimate.

\vspace{3mm}

The main contribution of this paper is precisely such a bound.  For
\(n/2<k\le n-2\), set
\begin{equation}\label{eq:Mnk-intro}
  M_{n,k}=\binom{k+1}{2}+n-k-1.
\end{equation}
Using \eqref{eq:Brouwer-intro}, we prove that every connected
graph of order \(n\) and size \(m\) satisfies
\begin{equation}\label{eq:CB-intro}
  S_k(G)\le
  \frac{2k}{n}m+
  \frac{2(n-k)}{n}M_{n,k}-(n-k-1).
\end{equation}
This inequality is sharp at \(\PA_{n,k+1}\).  If \(U_B\) and \(U_C\)
denote the right-hand sides of \eqref{eq:Brouwer-intro} and
\eqref{eq:CB-intro}, respectively, then
\begin{equation}\label{eq:bounds-difference-intro}
  U_B-U_C
  =\frac{2k-n}{n}\bigl(M_{n,k}-m\bigr).
\end{equation}
Thus the new estimate is stronger for \(m<M_{n,k}\), the two estimates
meet at \(m=M_{n,k}\), and Brouwer's estimate is stronger for
\(m>M_{n,k}\). 

The proof of \eqref{eq:CB-intro} is driven by the bottom, rather than the
top, of the Laplacian spectrum.  Write \(r=n-k-1\) and let \(B\) be the
sum of the \(r\) smallest positive Laplacian eigenvalues.  Since
\(S_k=2m-B\), the desired upper bound is a lower bound for \(B\).

Combining \eqref{eq:CB-intro} with Brouwer's inequality and optimizing
over \(k\) proves \eqref{eq:pineapple-conjecture}, with optimal pineapple
parameter \(1+\lfloor2n/3\rfloor\).  The maximum inequality uses the
theorem of Kothari and Tudose \cite{KothariTudose2026}.  The uniqueness
statement additionally uses the equality characterization for Brouwer's
inequality, proved independently in
\cite{CaiEtAl2026,CuiChen2026}.

The paper is organized as follows.  \Cref{sec:preliminaries} records the
Laplacian-energy identity and the pineapple spectrum, and states the main results. \Cref{sec:tail} gives all necessary results and the proof for the complementary eigenvalue-sum bound.  \Cref{sec:energy} converts that bound
into the extremal Laplacian-energy theorem and optimizes the comparison
pineapples.  \Cref{sec:equality} treats equality and uniqueness.

\section{Preliminaries and main results}\label{sec:preliminaries}

All graphs in this paper are finite and simple.  The size of \(G\) is
\(m=|E(G)|\), and its average degree is
\(\dd=2m/n\).  Laplacian eigenvalues are always listed in nonincreasing
order.

\begin{lemma}\label{lem:critical-index}
Let \(G\) have at least one edge, and put
\[
  \sigma^+=|\{i:\mu_i(G)>\dd\}|.
\]
Then \(1\le \sigma^+\le n-1\) and
\begin{equation}\label{eq:critical-index}
  \LE(G)=2 \Big(S_{\sigma^+}(G)-\frac{2m\, \sigma^+}{n}\Big).
\end{equation}
\end{lemma}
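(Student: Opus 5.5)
The plan is to use the trace identity \(\sum_{i=1}^n \mu_i(G)=\operatorname{tr}L(G)=2m=n\dd\), which is equivalent to \(\sum_{i=1}^n(\mu_i-\dd)=0\). Put \(\sigma=\sigma^+\). Because the eigenvalues are listed in nonincreasing order, the indices with \(\mu_i>\dd\) are exactly \(i=1,\ldots,\sigma\). I will split \(\LE(G)\) into the positive part over these indices and the nonpositive part over the remaining ones:
\[
  \LE(G)=\sum_{i\le\sigma}(\mu_i-\dd)-\sum_{i>\sigma}(\mu_i-\dd).
\]
The trace identity gives \(\sum_{i>\sigma}(\mu_i-\dd)=-\sum_{i\le\sigma}(\mu_i-\dd)\). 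Therefore
\[
  \LE(G)=2\sum_{i\le\sigma}(\mu_i-\dd)=2\bigl(S_\sigma(G)-\sigma\dd\bigr)=2\Bigl(S_{\sigma}(G)-\frac{2m\sigma}{n}\Bigr),
\]
which is \eqref{eq:critical-index}. The identity holds even when \(\sigma=0\), but that case will be excluded below.

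For the bounds \(1\le\sigma^+\le n-1\), note first that \(\mu_n=0<\dd\), since \(m\ge1\). So the index \(n\) is never counted, and \(\sigma^+\le n-1\).

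For the lower bound, suppose \(\sigma^+=0\). Then every \(\mu_i\le\dd\). Since \(\sum_i\mu_i=n\dd\), this forces \(\mu_i=\dd\) for all \(i\), and in particular \(\mu_n=\dd>0\), contradicting \(\mu_n=0\). More directly, the average of the \(\mu_i\) equals \(\dd\), and not all of them equal \(\dd\) because \(\mu_n=0\ne\dd\). Some eigenvalue must therefore strictly exceed the average, so \(\sigma^+\ge1\).

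Nothing here is a real obstacle. The only point needing care is the strictness of the inequality defining \(\sigma^+\). Eigenvalues equal to \(\dd\) contribute zero to \(\LE(G)\), so it does not matter which side of the split they fall on, and with the strict definition they go to the second sum. The only role of the hypothesis \(m\ge1\) is to make \(\dd>0=\mu_n\).
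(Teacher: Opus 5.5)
Your proof is correct and matches the paper's: the trace identity \(\sum_i(\mu_i-\dd)=0\) gives the formula, and \(\mu_n=0<\dd\) gives \(\sigma^+\le n-1\). For \(\sigma^+\ge1\), the paper uses the explicit bound \(\mu_1\ge 2m/(n-1)>\dd\), obtained by averaging over the \(n-1\) eigenvalues other than \(\mu_n\); this is the same averaging idea as your observation that the eigenvalues cannot all equal their mean \(\dd\).
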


\begin{proof}
We have $$\sum_{i=1}^n (\mu_i(G)-\dd)=0.$$ This gives $$\sum_{\mu_i>\dd}(\mu_i-\dd)=-\sum_{\mu_i<\dd}(\mu_i-\dd).$$
Therefore, \[
  \frac{\LE(G)}2
  =\sum_{\mu_i>\dd}(\mu_i-\dd)
  =S_{\sigma^+}-\sigma^+ \dd.
\]

Since \(m\ge1\), we have
\[
  \mu_1(G)\ge\frac{2m}{n-1}>\frac{2m}{n}=\dd>0=\mu_n(G).
\]
Hence \(1\le\sigma^+\le n-1\).
\end{proof}

\vspace{3mm}

Taking \(t=\omega-1\), Das and Mojallal
\cite[Lemma~4.1]{DasMojallal2016} obtained
\[
  \LE(\PA_{n,\omega})
  =
  -\frac{2t^3}{n}
  +\left(2+\frac{2}{n}\right)t^2
  +\left(\frac{4}{n}-4\right)t
  +2n-2.
\]
Equivalently, under the change of variables
$x=n-\omega+1=n-t$, this identity becomes \cref{eq:fn-definition}. We write \(\Lspec(G)\) for the Laplacian spectrum of \(G\), with
superscripts in square brackets indicating multiplicities.
\begin{lemma}\label{lem:pineapple-data}
For \(2\le \omega\le n\), the pineapple \(\PA_{n,\omega}\) has $\binom{\omega}{2}+n-\omega$ edges, and
$$\Lspec(\PA_{n,\omega})=\{n,\ \omega^{[\omega-2]},\ 1^{[n-\omega]},\ 0\}.
$$ Moreover, for \(1\le x\le n-1\),
\begin{equation}\label{eq:fn-definition}
  \frac{\LE(\PA_{n,n-x+1})}{2}
  =f_n(x):=
  \frac{2x}{n}
  \left[\binom{n-x+1}{2}+x-1\right]-(x-1).
\end{equation}
\end{lemma}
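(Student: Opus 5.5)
We treat the three assertions of \cref{lem:pineapple-data} in turn: the edge count, the spectrum, and the energy formula. The edge count is immediate, since \(K_\omega\) contributes \(\binom{\omega}{2}\) edges and each of the \(n-\omega\) pendant vertices contributes one more.

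For the spectrum we use the threshold structure. \(\PA_{n,\omega}\) is built by starting with \(n-\omega\) isolated vertices plus one further vertex, and then adding the remaining \(\omega-1\) clique vertices one at a time as dominating-type vertices. More precisely, it is the join of a vertex \(v\) with the disjoint union \(K_{\omega-1}\cup \overline{K_{n-\omega}}\).

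We then apply the join rule for the Laplacian. If \(H\) has order \(n-1\) and Laplacian spectrum \(0=\nu_{n-1}\le\cdots\le\nu_1\), then \(K_1\vee H\) has spectrum \(\{n,\ \nu_1+1,\dots,\nu_{n-2}+1,\ 0\}\). Here
\[
\Lspec(H)=\{(\omega-1)^{[\omega-2]},0^{[n-\omega+1]}\}.
\]
Dropping one zero and adding \(1\) to the remaining eigenvalues gives \(\{n,\ \omega^{[\omega-2]},\ 1^{[n-\omega]},\ 0\}\). As an alternative check, the conjugate degree sequence of this threshold graph yields the same spectrum.

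For the energy formula, set \(\omega=n-x+1\), so that \(k:=\omega-1=n-x\). The plan is to invoke \cref{lem:critical-index} and identify \(\sigma^+\). Write \(m=\binom{n-x+1}{2}+x-1\), so \(\dd=2m/n\). We need to show that \(1<\dd<\omega\) whenever \(\omega\ge3\). The eigenvalues exceeding \(\dd\) are then \(n\) and the \(\omega-2\) copies of \(\omega\), giving \(\sigma^+=\omega-1=n-x\), and
\[
S_{\sigma^+}=n+\omega(\omega-2)=2m-(n-\omega)=2m-(x-1).
\]
\cref{lem:critical-index} then gives
\[
\LE/2=2m-(x-1)-\frac{2m(n-x)}{n}=\frac{2x}{n}m-(x-1),
\]
which is \(f_n(x)\).

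The inequalities on \(\dd\) are the only real check. For the lower bound, \(m\ge n-1\) by connectivity, so \(\dd\ge 2(n-1)/n\ge1\), with strict inequality for \(n\ge3\). For the upper bound, \(2m=\omega(\omega-1)+2(n-\omega)<n\omega\) reduces to \(\omega^2-3\omega<n\omega-2n\), that is, \((\omega-2)(n-\omega)+\omega>0\)… which is clear. The main obstacle is the boundary case \(\omega=2\), i.e.\ \(x=n-1\), the star, where the \(\omega\)-block is empty. Here \(\sigma^+=1\) and \(S_1=n\). The formula gives \(\frac{2(n-1)}{n}(n-1)-(n-2)\), and the direct computation gives \(n-\frac{2(n-1)}{n}\); both equal \((n^2-2n+2)/n\). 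The degenerate case \(n=2\) is checked directly in the same way.
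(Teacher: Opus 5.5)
Your proof is correct. It takes a different route from the paper, which does not actually prove this lemma. The paper treats the spectrum as the standard fact that a threshold graph's Laplacian spectrum is its conjugate degree sequence. It then imports the energy formula from Das and Mojallal \cite[Lemma~4.1]{DasMojallal2016} as a cubic in \(t=\omega-1\) and rewrites it via \(x=n-t\).

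You instead derive the spectrum from the join \(K_1\vee(K_{\omega-1}\cup\overline{K_{n-\omega}})\). You obtain the energy from \cref{lem:critical-index} by checking \(1\le\dd<\omega\), so that \(\sigma^+=\omega-1=n-x\) and \(S_{\sigma^+}=2m-(x-1)\). Your route is self-contained and spares the reader the algebra of converting the cubic in \(t\). It also explains the shape of \(f_n\): \(f_n(x)=\frac{2x}{n}m-(x-1)\) is \(S_k-k\dd\) for a graph whose \(x-1\) smallest positive eigenvalues all equal \(1\). This is exactly the equality case of \cref{thm:CB}, and it is why \(sM_{n,k}-r=f_n(n-k)\) in \cref{eq:high-energy}. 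The paper's citation buys only brevity.

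Two minor remarks. First, your verbal threshold construction does not give the pineapple. Beginning with \(n-\omega+1\) isolated vertices and then adding \(\omega-1\) dominating vertices produces the complete split graph \(K_{\omega-1}\vee\overline{K_{n-\omega+1}}\). A correct sequence is one vertex, then \(\omega-2\) dominating vertices, then \(n-\omega\) isolated vertices, then one final dominating vertex. This slip is harmless, since you only use the join description. Second, the separate treatment of \(\omega=2\) is unnecessary. Your general computation with \(\sigma^+=\omega-1=1\) and \(S_1=n+\omega(\omega-2)=n\) already covers it, once you note that \(\dd=2(n-1)/n\ge1\).
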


\vspace{3mm}

We shall use the following theorem of Kothari and Tudose.
\begin{theorem}[Kothari--Tudose \cite{KothariTudose2026}]
\label{thm:Brouwer}
For every simple graph \(G\) of order \(n\) and size \(m\),
\begin{equation}\label{eq:Brouwer}
  S_k(G)\le m+\binom{k+1}{2}
  \qquad (1\le k\le n).
\end{equation}
\end{theorem}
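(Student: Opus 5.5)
The plan is to derive \cref{eq:Brouwer} from the Grone--Merris--Bai majorization theorem \cite{Bai2011}, combined with Ky Fan's inequality for sums of top eigenvalues, following the route the paper attributes to Kothari and Tudose. Write \(d^*_i=|\{v: d_v\ge i\}|\) for the conjugate degree sequence. Bai's theorem says that the Laplacian spectrum of any graph \(H\) is majorized by \(d^*(H)\), so
\[
  S_k(H)\le \sum_{i=1}^{k} d^*_i(H)=\sum_{v}\min\{d_H(v),k\}.
\]
For a threshold graph this is an equality, and a direct count gives \(\sum_v\min\{d_v,k\}\le m+\binom{k+1}{2}\). Roughly, each edge is counted at most once from its larger-degree end, and the doubly counted edges lie essentially inside a \(k\)-clique of high-degree vertices, which contributes at most \(\binom{k+1}{2}\). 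So threshold graphs satisfy the inequality. The difficulty is that the conjugate-degree bound alone fails in general: for a \(k\)-regular graph, \(\sum_v\min\{d_v,k\}=nk\) while \(m+\binom{k+1}{2}\approx nk/2\).

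To get around this, I would split the edge set as \(E(G)=E(H_1)\sqcup E(H_2)\). Since \(L(G)=L(H_1)+L(H_2)\) and each \(L(H_i)\) is positive semidefinite, Ky Fan gives
\[
  S_k(G)\le S_k(H_1)+S_k(H_2).
\]
The decomposition should be chosen so that every edge is paid for only once. On \(H_2\) I would use the trace-type estimate \(S_k(H_2)\le \operatorname{tr}L(H_2)-(\text{bottom eigenvalues})\); in the cleanest case this is \(|E(H_2)|\)-type rather than \(2|E(H_2)|\), which holds for instance when \(H_2\) is a disjoint union of stars, since \(S_k(K_{1,s})\le s+1\). On \(H_1\) I would use Bai's bound, choosing \(H_1\) so that every edge of \(H_1\) has an endpoint of \(H_1\)-degree at least \(k\) except inside a set of at most \(k\) vertices. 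Then the count
\[
  \sum_v\min\{d_{H_1}(v),k\}\le |E(H_1)|+\binom{k+1}{2}
\]
goes through exactly as in the threshold case. Adding the two estimates gives \(S_k(G)\le m+\binom{k+1}{2}\).

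The main obstacle is the choice of decomposition, which must balance two costs. The low-degree edges, where both ends have degree less than \(k\), make the conjugate-degree bound double count, so they must go into a piece whose top-\(k\) sum is at most its number of edges. The pieces must also interact only through the single additive term \(\binom{k+1}{2}\), and in particular the star-forest bound must not cost an extra \(+1\) per component beyond what that term absorbs. My first attempt would be an iterative peeling: repeatedly remove a vertex of maximum degree together with its incident edges into \(H_1\) while its degree is at least \(k\), and put the remaining low-degree graph into \(H_2\). The \(\binom{k+1}{2}\) term would then come from the at most \(k\) peeled vertices' mutual adjacencies. If the star-forest estimate does not cover the residual graph, I would instead apply induction on \(n\) to the residual graph, which has maximum degree less than \(k\), and control it with the interlacing bound \(S_k\le S_{k}(\text{residual})+(\text{number of removed edges})\).
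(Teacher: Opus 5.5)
The paper gives no proof of this theorem. It imports it from Kothari and Tudose as a black box, so your sketch has to stand on its own, and as written it does not. Your ingredients are correct: Bai's majorization $S_k(H)\le\sum_v\min\{d_H(v),k\}$, equality for threshold graphs, and Ky Fan for $L(H_1)+L(H_2)$. But each quantitative step built on them fails.

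\emph{The count for $H_1$.} The bound $\sum_v\min\{d_{H_1}(v),k\}\le|E(H_1)|+\binom{k+1}{2}$ does not follow from your hypothesis on $H_1$. With $P=\{v:d_{H_1}(v)\ge k\}$ one has
\[
\sum_v\min\{d_v,k\}-|E(H_1)|=k|P|+e(V\setminus P)-e(P).
\]
This is at most $\binom{k+1}{2}$ only when $P$ is nearly a clique, as it is in a threshold graph. For $G=K_{k,N}$ with $N\ge k\ge2$, your peeling puts every edge into $H_1$, and every edge has an endpoint of degree at least $k$. Yet Bai's bound gives $kN+k^2$, which exceeds $m+\binom{k+1}{2}$ by $\binom{k}{2}$; the true value is $S_k(K_{k,N})=kN+k$.

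\emph{The star-forest estimate.} In fact $S_k(K_{1,s})=s+\min\{k,s\}$, not at most $s+1$.

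\emph{The fallback inequality.} The bound $S_k(G)\le S_k(\text{residual})+(\text{number of removed edges})$ is false. If it held, induction on $m$ would give $S_k(G)\le m$, which $K_2$ already contradicts. The correct Ky Fan cost of deleting a vertex $v$ is $d_v+\min\{k,d_v\}$, so induction on $n$ leaves an extra $\min\{k,d_v\}$ and never closes.

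\emph{The split itself.} More fundamentally, the Ky Fan split with your peeling fails even with exact values for the pieces. Take $G=K_{k+2}$ with $k\ge2$. Peeling removes two vertices, giving $H_1=K_2\vee\overline{K_k}$, with Laplacian spectrum $\{k+2,k+2,2^{[k-1]},0\}$, and $H_2=K_k$. Then
\[
S_k(H_1)+S_k(H_2)=4k+k(k-1)=k^2+3k>(k+1)^2=m+\binom{k+1}{2}.
\]
For $k=2$, $H_1=K_4-e$ alone already uses the whole budget $\binom{3}{2}$, and the single residual edge pushes the total over.

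The missing idea is a mechanism forcing the surpluses of all pieces to share one $\binom{k+1}{2}$. You correctly name this as the obstacle, but the proposal contains no argument that overcomes it, and the peeling decomposition provably does not.
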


\vspace{3mm}

Our first main result is an upper bound on $S_k(G)$ for $\frac{n}2< k\le n-2$.
\begin{theorem}[Complementary Brouwer bound]\label{thm:CB}
 Let \(G\) be a connected graph of order \(n\)
and size \(m\).  For every
integer \(k\) satisfying \(\frac{n}2<k\le n-2\), define
\[
  M_{n,k}=\binom{k+1}{2}+n-k-1.
\]
Then
\begin{equation}\label{eq:CB}
  S_k(G)\le
  \frac{2k}{n}m+
  \frac{2(n-k)}{n}M_{n,k}-(n-k-1).
\end{equation}

For every admissible $k$, equality is attained by \(\PA_{n,k+1}\).
\end{theorem}

\vspace{3mm}

We now apply \cref{thm:CB} to prove the connected pineapple conjecture, the second main result of the paper.

\begin{theorem}\label{thm:main}
Let \(G\) be a connected graph of order \(n\ge 2\). Then
\begin{equation}\label{eq:main}
  \LE(G)\le
  \LE(\PA_{n,\,1+\lfloor2n/3\rfloor}).
\end{equation}
\end{theorem}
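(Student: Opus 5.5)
The plan is to start from \cref{lem:critical-index}. If \(m=0\) then \(n=1\), which is excluded, so \(G\) has an edge. Writing \(k=\sigma^+\), we get \(\LE(G)/2=S_k(G)-2mk/n\) with \(1\le k\le n-1\). We then bound \(S_k(G)-2mk/n\) by a quantity depending only on \(n\) and \(k\), and finally maximize over \(k\), comparing the result with \(f_n(x)\) from \cref{lem:pineapple-data}.

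We split into three ranges of \(k\).
\begin{itemize}
\item \textbf{Case \(k\le n/2\).} Apply \cref{thm:Brouwer}: \(S_k-2mk/n\le m(1-2k/n)+\binom{k+1}{2}\). This is nondecreasing in \(m\), so we insert \(m\le\binom n2\); alternatively, use the trivial bound \(S_k\le 2m\) together with \(m\) small. This gives \(\LE(G)/2\le \binom n2(n-2k)/n+\binom{k+1}2\). This crude bound may be too weak. If so, I would instead use \(S_k\le \min\{2m,\, m+\binom{k+1}2\}\) and optimize over \(m\), or fall back on the spectral fact \(S_k\le kn\), and check that the result is below \(f_n(n-\lfloor 2n/3\rfloor)\approx 4n^2/27\cdot\) (leading order). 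At \(k=n/2\) Brouwer gives about \(n^2/8<4n^2/27\), which is encouraging. For small \(k\) the bound \(\binom n2(1-2k/n)\) is too big, so \(m\) must be controlled jointly: \(S_k\le kn\) gives \(S_k-2mk/n\le k(n-2m/n)\). Combining this with \(m(1-2k/n)+\binom{k+1}2\) and minimizing the larger of the two over \(m\) should give an \(O(k\,n)\)-type bound. I expect this balancing to be routine but fiddly.
\item \textbf{Case \(n/2<k\le n-2\).} Apply \cref{thm:CB}. The \(m\)-terms cancel exactly, giving
\[
\LE(G)/2\le \frac{2(n-k)}{n}M_{n,k}-(n-k-1)=f_n(n-k),
\]
since \(M_{n,k}=\binom{n-x+1}{2}+x-1\) with \(x=n-k\). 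So in this range \(\LE(G)\le \LE(\PA_{n,k+1})\).
\item \textbf{Case \(k=n-1\).} Here \(\mu_{n-1}>\bar d\), so \(\LE(G)/2=2m-(n-1)\cdot 2m/n=2m/n\le n-1\), which is at most \(f_n(1)\) or is handled directly.
\end{itemize}

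It then remains to show that \(\max_{1\le x<n/2} f_n(x)\) is attained at \(x=n-\lfloor2n/3\rfloor\). The function \(f_n\) is a cubic in \(x\) with negative leading coefficient \(2x^3/(2n)\cdot\)sign, namely \(f_n(x)=\tfrac{x}{n}(n-x+1)(n-x)+\tfrac{2x(x-1)}{n}-(x-1)\). I would compute the forward difference \(f_n(x+1)-f_n(x)\), which is a quadratic in \(x\), and show it is positive for \(x<\lceil n/3\rceil\) and nonpositive afterwards. Doing this by cases on \(n\bmod 3\) should identify the maximizer \(x=\lceil n/3\rceil=n-\lfloor 2n/3\rfloor\). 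Small \(n\) (\(n\le 4\)) are checked by hand. The range \(k\le n/2\), \(x\ge n/2\), must also be shown to stay below this maximum.

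I expect the main obstacle to be the small-\(k\) range. There Brouwer's bound is increasing in \(m\), so we must show that the best bound, uniformly over \(m\), stays below \(f_n(\lceil n/3\rceil)\approx 4n^2/27\). My first attempt is to take the bound \(\min\{m(1-2k/n)+\binom{k+1}2,\; k(n-2m/n)\}\) and maximize it over \(m\) at the crossing point.
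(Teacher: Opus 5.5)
Your outline follows the paper's proof: the critical-index identity; Brouwer's inequality combined with \(\mu_i\le n\) and evaluated at the crossing point for \(\sigma^+\le n/2\); \cref{thm:CB}, giving exactly \(f_n(n-\sigma^+)\) for \(n/2<\sigma^+\le n-2\); the endpoint \(\sigma^+=n-1\), giving \(2m/n\le n-1=f_n(1)\); and a forward-difference analysis of \(f_n\) by residues mod~3.

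Three minor slips first. ``Minimizing the larger of the two over \(m\)'' should be maximizing the smaller, as you say correctly at the end. The crude substitution \(m\le\binom n2\) already fails at \(k=1\), where it gives about \(n^2/2\). And \(f_n\) has leading coefficient \(+1/n\) in \(x\), not a negative one, so it is unimodal only on \([1,n/2]\).

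The real gap is the step you flag and leave open: you never show that the small-index bound stays below \(f_n(\lceil n/3\rceil)\), and your heuristics understate how tight this is. At the crossing point \(m_0=kn-\binom{k+1}{2}\) the bound equals
\[
g_n(k)=k(n-2k)+\frac{k^2(k+1)}{n}.
\]
At \(k\approx n/3\) this is \(\approx 4n^2/27\), the same leading term as the pineapple value. So it does not sit comfortably below the target, and no order-of-magnitude estimate will close this case. Pairing this range with \(x=n-k\ge n/2\) does not help either, since \(g_n(n/3)\approx 4n^2/27\) is much larger than \(f_n(2n/3)\approx 2n^2/27\).

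The missing idea is an exact identity. Expanding gives \(f_n(x)=xn-2x^2+\frac{x^3+x^2-2x}{n}+1\), and hence
\[
f_n(x)-g_n(x)=1-\frac{2x}{n}.
\]
So for \(1\le k\le n/2\) you get \(g_n(k)\le f_n(k)\). The small-index range is dominated by the same function \(f_n\), evaluated at \(x=k\) rather than at \(x=n-k\).

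Consequently all three ranges are bounded by \(\max_{1\le x\le\lfloor n/2\rfloor}f_n(x)\). For even \(n\) this range must include \(x=n/2\), not only \(x<n/2\) as in your statement. A single forward-difference computation then finishes the proof. The quantity \(n\bigl(f_n(x+1)-f_n(x)\bigr)=3x^2+(5-4n)x+n^2-2n\) is nonincreasing on \([1,n/2]\) for \(n\ge5\). It is positive at \(\lceil n/3\rceil-1\) and negative at \(\lceil n/3\rceil\).
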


\section{Proof of the complementary eigenvalue-sum bound}
\label{sec:tail}

This section proves \cref{thm:CB}.  We first record the precise
consequence of Bogdanowicz's theorem that is needed.

%\subsection{The minimum number of spanning trees}

For a connected graph \(G\), let \(t(G)\) denote its number of spanning trees.

\begin{proposition}\label{prop:Bogdanowicz}
Let \(p\ge4\), \(r\ge1\), and \(2\le h\le p-2\).  If \(G\) is connected with
\[
  n=p+r,
  \qquad
  m=\binom{p-1}{2}+h+r,
\]
then
\begin{equation}\label{eq:tau-lower-h}
  t(G)\ge h\,p^{h-1}(p-1)^{p-h-2}.
\end{equation}
\end{proposition}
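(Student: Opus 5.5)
The plan is to deduce \eqref{eq:tau-lower-h} directly from Bogdanowicz's theorem on connected graphs with the minimum number of spanning trees. Bogdanowicz showed that, for a fixed order \(n\) and a size \(m\) in the relevant range, the number of spanning trees is minimized by a graph with a dense core and a tree-like periphery. Here the core is a clique \(K_{p-1}\) together with one extra vertex joined to \(h\) of the clique vertices. The remaining \(r\) vertices are attached as pendant vertices, or more generally as trees hanging off the core.

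So the first step is to check that our parameters fall within the hypotheses of Bogdanowicz's theorem. These are \(n=p+r\) with \(r\ge1\), \(p\ge4\), \(2\le h\le p-2\), and \(m=\binom{p-1}{2}+h+r\). This means \(m\) lies strictly between the size of \(K_{p-1}\) plus a spanning forest on the other vertices and the size of \(K_p\) plus that forest. I expect this step to be the main obstacle. Bogdanowicz's statement is phrased in terms of the position of \(m\) relative to consecutive triangular numbers and the excess \(m-n\). I would first rewrite \(m-n+1=\binom{p-1}{2}+h-p+1\) as the cyclomatic number and match it to his parametrization. I would also verify that the boundary cases \(h=2\) and \(h=p-2\) are covered, since \(h=1\) and \(h=p-1\) are excluded in the statement precisely to avoid degenerate core structures.

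Granting the extremal structure, the remaining work is a computation of \(t(G^*)\) for the extremal graph \(G^*\). Attaching trees (in particular pendant vertices) does not change the number of spanning trees. Hence \(t(G^*)=t(H)\), where \(H\) is the \(p\)-vertex core. The core is a join:
\[
  H=K_h\vee\bigl(K_{p-1-h}\cup K_1\bigr).
\]
I would compute its Laplacian spectrum from the standard join rule: for \(G_1\vee G_2\) of orders \(a,b\), the spectrum consists of \(0\), \(a+b\), the values \(\mu_i(G_1)+b\) for the nonzero \(\mu_i(G_1)\), and the values \(\mu_i(G_2)+a\) for all but one zero eigenvalue of \(G_2\).

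\begin{itemize}
\item From \(K_h\) we obtain \(h+(p-h)=p\) with multiplicity \(h-1\).
\item From \(K_{p-1-h}\cup K_1\) we obtain \((p-1-h)+h=p-1\) with multiplicity \(p-h-2\), and the shifted extra zero gives one eigenvalue \(h\).
\item Together with the eigenvalues \(p\) and \(0\) from the join rule, this gives
\[
  \Lspec(H)=\{p^{[h]},\,(p-1)^{[p-h-2]},\,h,\,0\}.
\]
\end{itemize}

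Finally, the matrix-tree theorem gives \(t(H)=\frac1p\prod_{\mu\neq0}\mu=h\,p^{h-1}(p-1)^{p-h-2}\). Since \(G^*\) minimizes \(t\) among connected graphs with the given \(n\) and \(m\), this yields \eqref{eq:tau-lower-h} for every connected \(G\) with these parameters.
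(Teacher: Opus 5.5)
Your proposal is correct and follows the paper's proof. Both rest on Bogdanowicz's theorem that, for these $n$ and $m$, $t(G)$ is minimized over connected graphs by the threshold graph formed from $K_{p-1}$, one vertex joined to $h$ clique vertices, and $r$ pendant vertices, and then evaluate $t$ on that graph. The only difference is that you compute the count yourself from the Laplacian spectrum $\{p^{[h]},(p-1)^{[p-h-2]},h,0\}$ of the core $K_h\vee(K_{p-1-h}\cup K_1)$ via the Matrix--Tree theorem, whereas the paper quotes Bogdanowicz's formula. In your join rule the shifted eigenvalues of $G_1$ should be ``all but one zero eigenvalue,'' which matches ``the nonzero ones'' here only because $K_h$ is connected.
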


\begin{proof}
Bogdanowicz proved that, among all connected graphs of fixed order \(n\)
and size \(m\), the minimum number of spanning trees is attained by a
canonical threshold graph \(L_{n,m}\) \cite{Bogdanowicz2009}.  For the
parameters above, \(L_{n,m}\) consists of a clique \(K_{p-1}\), one
additional vertex adjacent to exactly \(h\) clique vertices, and \(r\)
pendant vertices.  Applying Bogdanowicz's spanning-tree formula to this
graph gives
\[
  t(L_{n,m})
  =h\,p^{h-1}(p-1)^{p-h-2}.
\]
Consequently,
\[
  t(G)\ge t(L_{n,m})
  =h\,p^{h-1}(p-1)^{p-h-2},
\]
as required.
\end{proof}

\vspace{3mm}
%\subsection{The scalar ratio lemma}

The proof of the main result of this section requires the following lemma.

\begin{lemma}\label{lem:ratio}
Let \(r\ge1\) and \(p\ge r+3\), and put
\[
  n=p+r,\qquad
  s=\frac{2(r+1)}n,\qquad
  \alpha=\frac sr,\qquad
  D=\frac1\alpha.
\]
For \(1\le d<D\), define
\begin{equation}\label{eq:Ad}
  A_d=1+\frac{r}{p(p-1)}-\frac{2d}{np}
\end{equation}
and
\begin{equation}\label{eq:Rd}
  \mathcal R_d=
  \frac{p^2}{n(p-d-1)}
  (1-\alpha d)^r A_d^{p-1}
  \left(\frac{p}{p-1}\right)^{d-1}.
\end{equation}
Then \(\mathcal R_d<1\).
\end{lemma}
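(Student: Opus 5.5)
The plan is to pass to logarithms, replace every factor of \(\mathcal R_d\) except \(1/(p-d-1)\) by an elementary exponential upper bound, and observe that the resulting majorant is convex in \(d\). It then suffices to check the two endpoints \(d=1\) and \(d=D\).

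First I check that every factor is positive on \(1\le d<D\). Since \(D=\frac{rn}{2(r+1)}<\frac n2=\frac{p+r}{2}\) and \(p\ge r+3\), we get \(p-d-1>p-1-\frac{p+r}{2}=\frac{p-r-2}{2}\ge\frac12\). Also \(1-\alpha d>0\) because \(d<D=1/\alpha\). Finally \(A_d>1-\frac{2D}{np}>1-\frac1p>0\).

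Next I apply \(\log(1+x)\le x\) to each factor:
\[
  r\log(1-\alpha d)\le -r\alpha d=-sd=-\tfrac{2(r+1)}{n}d,
  \qquad
  (p-1)\log A_d\le \tfrac{r}{p}-\tfrac{2(p-1)}{np}d,
\]
\[
  (d-1)\log\tfrac{p}{p-1}\le \tfrac{d-1}{p-1}.
\]
Hence \(\log\mathcal R_d\le g(d)\), where
\[
  g(d)=\log\frac{p^2}{n}-\log(p-d-1)+\frac rp-\frac1{p-1}+c\,d,
  \qquad
  c=\frac1{p-1}-\frac{2(r+1)}{n}-\frac{2(p-1)}{np}.
\]
The function \(g\) is convex on \([1,D]\), since \(-\log(p-d-1)\) is convex and the remaining terms are affine in \(d\). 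It is continuous up to \(d=D\) because \(p-D-1>0\). Therefore \(\sup_{[1,D)}g\le\max\{g(1),g(D)\}\), and it is enough to prove \(g(1)<0\) and \(g(D)<0\).

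At \(d=1\), the inequality \(g(1)<0\) reads
\[
  \frac{p^2}{(p+r)(p-2)}\exp\!\Big(\frac rp-\frac{2(r+1)}{n}-\frac{2(p-1)}{np}\Big)<1.
\]
The prefactor is at most \(1+\frac{2-r}{p}+O(p^{-2})\). The exponent is negative and of order \(-(r+2)/p\) in the relevant range. I will make this rigorous with \(e^{-x}\le 1/(1+x)\) and a polynomial inequality in \(p,r\) that uses \(p\ge r+3\). A few small cases (\(p=4,5\)) may need to be checked by hand.

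At \(d=D\), I use \(p-D-1=\frac{2(p-1)(r+1)-r(p+r)}{2(r+1)}\). The linear term gives \(cD=\frac{rn}{2(r+1)}c\), which is roughly \(-\frac{r}{2}\bigl(1+\frac{2}{r+1}\bigr)\) up to \(O(r/p)\). The two regimes behave differently:
\begin{itemize}
\item When \(r\) is small compared with \(p\), the prefactor \(\frac{p^2}{n(p-D-1)}\) stays bounded, roughly \(\frac{4}{3}\) when \(r=1\). The exponential factor \(e^{cD}\) is bounded below \(1\) by enough to win.
\item When \(p\) is close to \(r+3\), the prefactor can be as large as order \(n\). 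However \(e^{cD}\approx e^{-n/2}\), so the exponential again wins.
\end{itemize}
I will write \(g(D)\) as an explicit function of \((p,r)\) and show it is negative. The argument uses \(\log x\le x-1\) on the prefactor and then either monotonicity in \(p\) for fixed \(r\), or the substitution \(p=r+3+j\) with \(j\ge0\).

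The endpoint \(d=D\) is the step I expect to be the main obstacle. The linear bound \(r\log(1-\alpha d)\le -\alpha r d\) discards the fact that \(\mathcal R_d\to0\) as \(d\to D\). As a result, the inequality \(g(D)<0\) is tight-ish precisely in the regime \(p\approx r+3\), where \(p-D-1\) is near \(\frac12\).

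If the crude bound fails there, I will split the range. On \(d\le D-\delta\), I keep the convexity argument. On \([D-\delta,D)\), I keep the exact factor \((1-\alpha d)^r\le(\alpha\delta)^r\), which makes \(\mathcal R_d\) small directly. Here \(\delta\) is chosen, for example \(\delta=\frac{D}{r+1}\), so that both pieces close.
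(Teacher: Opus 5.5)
Your reduction is sound, and it follows a different route from the paper. The positivity checks are right, and the three linearizations are valid on $1\le d<D$. Since $g$ is convex, $\log\mathcal R_d<0$ follows once $g(1)<0$ and $g(D)<0$.

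The paper does not linearize before differentiating. It keeps the exact $\ell=\log\mathcal R_d$ and proves $\ell(1)<0$. It then shows $\ell'(d)<0$ on $[1,D)$ by bounding each term of the exact derivative: it drops the $A_d$-term and uses $\log\frac{p}{p-1}<\frac1{p-1}$. This reduces to the rational inequality $\frac{s}{1-\alpha}>\frac1{p-D-1}+\frac1{p-1}$.

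Your $g(1)$ is exactly the intermediate expression in the paper's $\ell(1)$ estimate. Applying $\log y\le y-1$ to the prefactor, which is equivalent to your $e^{-x}\le 1/(1+x)$ plan, gives $g(1)\le N(r,p)/(np(p-2))$, where $N(r,p)=-2(r+1)p^2+(r^2+4r+10)p-2r^2-4$. The substitution $p=r+q+2$ shows $-N>0$ for every $q\ge1$, so no small cases need separate hand checks. Note that $d=1$, not $d=D$, is the tight endpoint. Indeed $g(1)\approx -2(r+1)/p\to 0^-$ as $p\to\infty$, so asymptotic estimates like the ones you sketch cannot close it; you need the uniform polynomial inequality.

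The endpoint $d=D$ that you single out as the main obstacle is not one, and the splitting fallback is unnecessary. Put $q=p-r-2\ge1$ and $Z=2(r+1)(p-1)-rn=q(r+2)+2(r+1)$. Then $p-D-1=Z/(2(r+1))>\tfrac32$, so it is never near $\tfrac12$. Also $Z-n=(r+1)q$, and
\[
  -g'(D)=s+\frac{2(p-1)}{np}-\frac{1}{p-D-1}-\frac{1}{p-1}
  =\frac{2(r+1)^2q}{nZ}+\frac{p(q-2)+2}{np(p-1)}.
\]
For $q\ge2$ both terms are positive. For $q=1$ we have $p=r+3$ and $Z=3r+4$, and the right-hand side equals $\frac{r+1}{n}\bigl(\frac{2(r+1)}{3r+4}-\frac{1}{(r+2)(r+3)}\bigr)>0$. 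By convexity, $g'(d)\le g'(D)<0$ on $[1,D]$. So $g$ is strictly decreasing and $\sup_{[1,D)}g=g(1)<0$.

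With this observation, your proof is the paper's monotonicity argument run on the linearized majorant $g$ rather than on $\ell$ itself. You gain a fully explicit derivative with no $A_d$ in a denominator, at the cost of linearizing first. The paper avoids the majorant but must bound the exact derivative term by term. Either way, only the endpoint $d=1$ requires real work.
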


\begin{proof}
Write \(q=p-r-2\ge1\).  Since
\[
  rn-2(r+1)=r(2r+q)-2>0,
  \qquad
  2(r+1)(p-2)-rn=q(r+2)>0,
\]
we have \(1<D<p-2\).  Also, for \(d<D\),
\[
  A_d>
  1+\frac{r}{p(p-1)}-\frac{r}{p(r+1)}
  >1-\frac1p>0.
\]

Let \(\ell(d)=\log\mathcal R_d\).  At \(d=1\), we have
\[
  \ell(1)
  =
  \log\frac{p^2}{n(p-2)}
  +r\log(1-\alpha)
  +(p-1)\log A_1.
\]
Since \(0<\alpha<1\), the inequalities
\(\log x\le x-1\) and \(\log(1-\alpha)<-\alpha\) yield
\begin{align}
  \ell(1)
  &<
  \left(\frac{p^2}{n(p-2)}-1\right)
  -r\alpha+(p-1)(A_1-1)\notag\\
  &=
  \frac{p^2}{n(p-2)}-1
  -\frac{2(r+1)}{n}
  +\frac{r}{p}
  -\frac{2(p-1)}{np}\notag\\
  &=
  \frac{
    p\bigl[p^2-n(p-2)\bigr]
    -2(r+1)p(p-2)
    +rn(p-2)
    -2(p-1)(p-2)
  }{np(p-2)}\notag\\
  &=
  \frac{N(r,p)}{np(p-2)},
  \label{eq:ell-one}
\end{align}
where, after substituting \(n=p+r\) and simplifying,
\begin{equation}\label{eq:N-rp}
  N(r,p)
  =
  -2(r+1)p^2
  +(r^2+4r+10)p
  -2r^2-4.
\end{equation}

With \(p=r+q+2\),
\[
  -N(r,p)
  =r^3+9r^2+8r-8
   +(q-1)\!\left[2(r+1)(q+1)+3r^2+8r-2\right]>0,
\]
so \(\ell(1)<0\).

Differentiation gives
\[
  \ell'(d)=
  \frac1{p-d-1}
  -\frac{s}{1-\alpha d}
  -\frac{2(p-1)}{npA_d}
  +\log\frac{p}{p-1}.
\]
Since \(1\le d<D\) and \(\log(p/(p-1))<1/(p-1)\),
\[
  \ell'(d)<
  -\frac{s}{1-\alpha}
  +\frac1{p-D-1}
  +\frac1{p-1}.
\]
Set
\[
  H=r(2r+q)-2,\qquad
  Z=q(r+2)+2(r+1),\qquad
  Y=r+q+1.
\] The preceding inequalities show that \(H>0\), while \(Z,Y>0\) are
immediate.
A direct simplification yields
\[
  \frac{s}{1-\alpha}
  -\frac1{p-D-1}
  -\frac1{p-1}
  =
  \frac{
  r^2(2r+3)q^2
  +2(r^4+2r^3+2r^2+5r+4)q
  +8(r+1)^2}{HZY}>0.
\]
Thus \(\ell'(d)<0\) on \([1,D)\), and hence
\(\ell(d)\le\ell(1)<0\).
\end{proof}

%\subsection{The hard-deficit tail estimate}

\vspace{3mm}
The proof of the main eigenvalue-sum bound requires the following
estimate for the sum of the \(r\) smallest positive Laplacian eigenvalues.

\begin{theorem}\label{thm:tail}
Let \(p\ge r+3\) and \(r\ge1\) be integers, let \(n=p+r\), and put
\[
  s=\frac{2(r+1)}n.
\]
Suppose that \(G\) is connected, has order \(n\), and has size
\[
  m=\binom p2+r-d,
  \qquad
  d\in\mathbb Z,\qquad
  1\le d<\frac r s.
\]
If
$\mu_1\ge\mu_2\ge\cdots\ge\mu_{n-1}>\mu_n=0$ 
are the Laplacian eigenvalues of \(G\), then
\begin{equation}\label{eq:tail-stability}
  B_r(G):=\sum_{i=n-r}^{n-1}\mu_i>r-sd.
\end{equation}

\end{theorem}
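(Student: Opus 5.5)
The plan is to argue by contradiction, comparing an AM--GM upper bound for the number of spanning trees with Bogdanowicz's lower bound, \cref{prop:Bogdanowicz}. The gap between the two is exactly the quantity controlled by \cref{lem:ratio}. So suppose \(B_r:=B_r(G)\le r-sd\). Since \(d<r/s\), we have \(0\le r-sd<r\).

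By the matrix-tree theorem,
\[
n\,t(G)=\prod_{i=1}^{n-1}\mu_i=\Bigl(\prod_{i=n-r}^{n-1}\mu_i\Bigr)\Bigl(\prod_{i=1}^{p-1}\mu_i\Bigr).
\]
The two blocks have sums \(B_r\) and \(2m-B_r\), where \(2m=p(p-1)+2r-2d\). Applying AM--GM to each block gives
\[
n\,t(G)\le \phi(B_r),\qquad \phi(x)=\Bigl(\frac{x}{r}\Bigr)^r\Bigl(\frac{2m-x}{p-1}\Bigr)^{p-1}.
\]
The function \(\log\phi\) has derivative \(r/x-(p-1)/(2m-x)\). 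This is nonnegative whenever \(x/r\le (2m-x)/(p-1)\), i.e.\ whenever the small-block average is at most the large-block average. That holds on \([0,r-sd]\), since there \(x/r\le1\) while \((2m-x)/(p-1)\ge p\) or so. Hence \(\phi(B_r)\le\phi(r-sd)\). Now \(r-sd=r(1-\alpha d)\), and a short computation (to be verified) gives
\[
\frac{2m-r+sd}{p-1}=p+\frac{r}{p-1}-\frac{2d}{n}=pA_d.
\]
Therefore
\[
n\,t(G)\le (1-\alpha d)^r\,p^{p-1}A_d^{p-1}.
\]

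For the lower bound, write \(m=\binom{p-1}{2}+h+r\) with \(h=p-1-d\), and check the hypotheses of \cref{prop:Bogdanowicz}. First, \(p\ge r+3\ge4\). Next, \(d\ge1\) gives \(h\le p-2\). Finally, \(d<D<p-2\) (established in the proof of \cref{lem:ratio}) gives \(d\le p-3\) for integer \(d\), hence \(h\ge2\). Bogdanowicz then yields
\[
t(G)\ge (p-d-1)\,p^{p-d-2}(p-1)^{d-1}.
\]
Dividing the upper bound by \(n\) times this lower bound, the quotient simplifies to exactly \(\mathcal R_d\) from \eqref{eq:Rd}. So \(1\le \mathcal R_d\), contradicting \cref{lem:ratio}, and hence \(B_r>r-sd\). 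The strict inequality comes for free, because the lemma is strict.

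The main obstacle is bookkeeping rather than ideas. One must confirm the monotonicity of \(\phi\) on the relevant range, including the degenerate case \(B_r=0\), which cannot occur for connected \(G\) anyway. One must also verify the algebraic identity for \(pA_d\) and the exact match of the ratio with \(\mathcal R_d\), including the exponent of \(p/(p-1)\), namely \(p^{p-1}/p^{p-d-2}=p^{d+1}=p^2\cdot p^{d-1}\). I would check these identities first, since everything else is supplied by \cref{prop:Bogdanowicz} and \cref{lem:ratio}.
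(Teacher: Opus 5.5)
Your proposal is correct and follows the paper's proof essentially step for step. You assume \(B_r\le r-sd\), bound \(n\,t(G)\) above by the two-block AM--GM function evaluated at \(r-sd\) using its monotonicity, bound \(t(G)\) below via \cref{prop:Bogdanowicz} with \(h=p-d-1\), and identify the quotient with \(\mathcal R_d<1\) from \cref{lem:ratio}. The only loose spot is your claim that \((2m-x)/(p-1)\ge p\) ``or so'': its true minimum on \([0,r-sd]\) is \(pA_d>p-1\ge 3\), which can dip slightly below \(p\). This still exceeds \(x/r\le 1\), so your monotonicity step goes through; the paper reaches the same conclusion instead from \(m\ge n-1\), via \(2mr>(n-1)x\).
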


\begin{proof}
Put
\[
  T=r-sd,\qquad
  h=p-d-1.
\]
Since \(p\ge r+3\),
\[
  2(r+1)(p-2)-rn=(p-r-2)(r+2)>0,
\]
so \(r/s<p-2\).  Since \(d\) is an integer with
\(1\le d<r/s\), we have \(2\le h\le p-2\).  Also
\[
  m=\binom{p-1}{2}+h+r.
\]
Hence \cref{prop:Bogdanowicz} gives
\begin{equation}\label{eq:tau-lower-d}
  t(G)\ge
  (p-d-1)p^{p-d-2}(p-1)^{d-1}
  =:t_0.
\end{equation}

Write \(B=B_r(G)\).  The Matrix--Tree theorem followed by AM--GM,
applied separately to the \(r\) smallest positive eigenvalues and the
remaining \(p-1\) eigenvalues, gives
\begin{equation}\label{eq:Phi}
  n\, t(G)
  \le
  \Phi_d(B):=
  \left(\frac Br\right)^r
  \left(\frac{2m-B}{p-1}\right)^{p-1}.
\end{equation}
Suppose, to the contrary, that \(B\le T\).  Since \(T<r\) and
\(m\ge n-1\),
\[
  \frac{d}{dB}\log\Phi_d(B)
  =\frac rB-\frac{p-1}{2m-B}>0
  \qquad(0<B\le T),
\]
because \(2mr\ge2(n-1)r>(n-1)B\).  Consequently,
\begin{equation}\label{eq:Phi-T}
  n\, t_0\le n\, t(G)\le\Phi_d(B)\le\Phi_d(T).
\end{equation}

Let \(\alpha=s/r\), \(D=1/\alpha\), and let \(A_d\) be as in
\cref{eq:Ad}.  A direct calculation gives
\[
  \frac Tr=1-\alpha d,
  \qquad
  \frac{2m-T}{p-1}=pA_d.
\]
Therefore
\begin{equation}\label{eq:ratio-identification}
  \frac{\Phi_d(T)}{n\, t_0}
  =
  \frac{p^2}{n(p-d-1)}
  (1-\alpha d)^r A_d^{p-1}
  \left(\frac p{p-1}\right)^{d-1}
  =\mathcal R_d.
\end{equation}
By \cref{lem:ratio}, the right-hand side is strictly smaller than one,
contradicting \cref{eq:Phi-T}.  Thus \(B>T=r-sd\).
\end{proof}

\subsection{Proof of the complementary bound}

\begin{proof}[Proof of \cref{thm:CB}]
Fix \(n/2<k\le n-2\), and put
\[
  p=k+1,\qquad
  r=n-k-1,\qquad
  s=\frac{2(n-k)}n=\frac{2(r+1)}n.
\]
The upper restriction on \(k\) gives \(r\ge1\), while \(k>n/2\) gives
\[
  p-r-2=2k-n\ge1,
  \qquad
  0<s<1;
\]
hence \(p\ge r+3\).  Set
\[
  C=\binom{k+1}{2},
  \qquad
  M=C+r=M_{n,k},
  \qquad
  d=M-m,
\]
and let
\[
  B=\sum_{i=k+1}^{n-1}\mu_i(G).
\]
Since \(B=2m-S_k(G)\), it is enough to show
\begin{equation}\label{eq:B-target}
  B\ge r-sd.
\end{equation}

If \(d\le0\), Brouwer's inequality gives
\[
  B\ge m-C=r-d\ge r-sd.
\]
If \(d\ge r/s\), connectedness gives \(B>0\ge r-sd\).
If \(1\le d<r/s\), \cref{thm:tail} gives \(B>r-sd\).
Thus \cref{eq:B-target} holds in every case.

Finally, \(d=M-m\), \(r=n-k-1\), and \(2-s=2k/n\) give
\[
  S_k(G)=2m-B
  \le
  \frac{2k}{n}m+
  \frac{2(n-k)}nM_{n,k}-(n-k-1).
\]
For \(\PA_{n,k+1}\), \(m=M_{n,k}\) and the bottom \(r\) positive
Laplacian eigenvalues are all \(1\), so equality holds.
\end{proof}

\section{Maximum Laplacian energy}
\label{sec:energy}
We first settle the small orders, allowing the remainder of the argument
to assume \(n>4\).  This also explains why uniqueness must be restricted
to that range.
\begin{proposition}[Small orders]\label{prop:small-orders}
Let \(G\) be a connected graph of order \(2\le n\le4\).  Then
\[
  \LE(G)\le
  \LE\!\left(\PA_{n,\,1+\lfloor2n/3\rfloor}\right)=2n-2.
\]
For \(n=2\) and \(n=3\), equality holds only for \(K_2\) and \(K_3\),
respectively.  For \(n=4\), equality holds precisely for
\[
  G\cong\PA_{4,3},\qquad G\cong K_4-e,\qquad\text{or}\qquad G\cong K_4.
\]
\end{proposition}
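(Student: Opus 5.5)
Since every connected graph of order at most four can be listed, I will argue by direct enumeration, using \cref{lem:critical-index} and \cref{lem:pineapple-data} to keep the spectral computations short.

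\emph{Step 1: the value of the right-hand side.} By \cref{lem:pineapple-data}, with \(x=n-\omega+1\), we have \(\LE(\PA_{n,\omega})=2f_n(x)\). For \(n=2,3,4\) the parameter \(\omega=1+\lfloor2n/3\rfloor\) equals \(2,3,3\), so the pineapples are \(K_2\), \(K_3\) and \(\PA_{4,3}\). Their values come straight from the spectra:
\begin{itemize}
\item \(K_2\): spectrum \(\{2,0\}\), \(\dd=1\), so \(\LE=2\).
\item \(K_3\): spectrum \(\{3,3,0\}\), \(\dd=2\), so \(\LE=1+1+2=4\).
\item \(\PA_{4,3}\): spectrum \(\{4,3,1,0\}\), \(m=4\), \(\dd=2\), so \(\LE=2+1+1+2=6\).
\end{itemize}
In each case the value is \(2n-2\).

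\emph{Step 2: enumerate the connected graphs.} For \(n=2\) the only connected graph is \(K_2\). For \(n=3\) there are two, \(P_3\) and \(K_3\). The path \(P_3\) has spectrum \(\{3,1,0\}\) and \(\dd=4/3\), so \(\LE(P_3)=5/3+1/3+4/3=10/3<4\). For \(n=4\) there are six connected graphs, and their Laplacian spectra are standard:
\begin{itemize}
\item \(P_4\): \(\{2+\sqrt2,\,2,\,2-\sqrt2,\,0\}\), \(\dd=3/2\).
\item \(K_{1,3}\): \(\{4,1,1,0\}\), \(\dd=3/2\).
\item \(C_4\): \(\{4,2,2,0\}\), \(\dd=2\).
\item \(\PA_{4,3}\): \(\{4,3,1,0\}\), \(\dd=2\).
\item \(K_4-e\): \(\{4,4,2,0\}\), \(\dd=5/2\).
\item \(K_4\): \(\{4,4,4,0\}\), \(\dd=3\).
\end{itemize}

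\emph{Step 3: compare each value with \(2n-2=6\).} Using \(\LE=\sum_i|\mu_i-\dd|\) (or \cref{lem:critical-index}):
\begin{itemize}
\item \(P_4\): \(\LE=2\sqrt2+1\approx3.83\).
\item \(K_{1,3}\): \(\LE=5/2+1/2+1/2+3/2=5\).
\item \(C_4\): \(\LE=2+0+0+2=4\).
\item \(\PA_{4,3}\): \(\LE=6\).
\item \(K_4-e\): \(\LE=3/2+3/2+1/2+5/2=6\).
\item \(K_4\): \(\LE=1+1+1+3=6\).
\end{itemize}
So \(\LE(G)\le6\) for every connected \(G\) of order four. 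Equality holds exactly for \(\PA_{4,3}\), \(K_4-e\) and \(K_4\), which is the claimed equality set.

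The only point needing care is completeness of the enumeration. There are exactly six connected graphs on four vertices: trees with three edges (\(P_4\), \(K_{1,3}\)), unicyclic graphs with four edges (\(C_4\), the paw \(\PA_{4,3}\)), and the graphs with five and six edges (\(K_4-e\), \(K_4\)). The spectra can be checked via the threshold/conjugate-degree rule for the threshold graphs \(K_{1,3}\), \(\PA_{4,3}\), \(K_4-e\) and \(K_4\), and directly for \(P_4\) and \(C_4\). Nothing else is subtle; this small case is not an obstacle.
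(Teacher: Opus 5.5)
Your proposal is correct and follows the paper's proof: enumerate the connected graphs of order at most four and compare each Laplacian energy with $2n-2$. There is one arithmetic slip, for $P_4$. The four terms are $(\tfrac12+\sqrt2)+\tfrac12+(\sqrt2-\tfrac12)+\tfrac32$, so $\LE(P_4)=2+2\sqrt2\approx4.83$, as in the paper's table, not $2\sqrt2+1$. This value is still below $6$, so the inequality and the equality characterization are unaffected.
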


\begin{proof}
For \(n=2\), the only connected graph is
\(\PA_{2,2}=K_2\), whose Laplacian energy is \(2\).  For \(n=3\), the
connected graphs are \(P_3\) and \(K_3=\PA_{3,3}\), and
\[
  \LE(P_3)=\frac{10}{3}<4=\LE(K_3).
\]
For \(n=4\), the six connected graphs, up to isomorphism, have
Laplacian energies
\[
\begin{array}{c|cccccc}
G
 & P_4 & C_4 & K_{1,3} & \PA_{4,3} & K_4-e & K_4\\
\hline
\LE(G)
 & 2+2\sqrt2 & 4 & 5 & 6 & 6 & 6.
\end{array}
\]
This proves the result.
\end{proof}

\begin{proof}[Proof of \cref{thm:main}] By \cref{prop:small-orders}, it remains to consider \(n>4\).
 Let \(G\) be connected of order \(n>4\),
and let \(\sigma^+\) be the index from
\cref{lem:critical-index}.  We split the proof according to \(\sigma^+\).

%\subsection{Small critical index}
\vspace{3mm}

Suppose first that \(1\le \sigma^+\le n/2\).  It is well-known that every Laplacian eigenvalue of
a simple graph is at most \(n\). 
Combining this with \cref{eq:Brouwer,eq:critical-index} gives
\begin{equation}\label{eq:low-envelope}
  \frac{\LE(G)}2
  \le
  \min\left\{
    m+\binom{\sigma^+ +1}{2},\, \sigma^+ n
  \right\}
  -\frac{2m \sigma^+}{n}.
\end{equation}
The first branch $$m+\binom{\sigma^+ +1}{2}-\frac{2m \sigma^+}{n}$$ is nondecreasing in \(m\), while the second one $\sigma^+ n-\frac{2m \sigma^+}{n}$ is
decreasing.  Their intersection occurs at
\[
  m_0=\sigma^+ n-\binom{\sigma^+ +1}{2}.
\]
It follows that
\begin{equation}\label{eq:gn}
  \frac{\LE(G)}2
  \le
  g_n(\sigma^+):=
  \sigma^+(n-2 \sigma^+)+\frac{(\sigma^+)^2(\sigma^+ +1)}n.
\end{equation}
The same formula remains valid at \(\sigma^+=n/2\), where the first branch
in \cref{eq:low-envelope} is constant.

%\subsection{Large critical index}
\vspace{3mm}

At the endpoint \(\sigma^+=n-1\), \(S_{\sigma^+}=2m\), and therefore
\begin{equation}\label{eq:endpoint}
  \frac{\LE(G)}2=\frac{2m}{n}\le n-1=f_n(1),
\end{equation} where \(f_n\) is defined in \cref{eq:fn-definition}.

\vspace{3mm}

Now suppose \(n/2< \sigma^+\le n-2\).  Apply \cref{thm:CB} with
\(k=\sigma^+\).  If
\[
  r=n-\sigma^+ -1,
  \qquad
  M=M_{n,\sigma^+},
  \qquad
  s=\frac{2(n-\sigma^+)}n,
\]
then \cref{eq:critical-index,eq:CB} give
\begin{equation}\label{eq:high-energy}
  \frac{\LE(G)}2
  \le sM-r
  =f_n(n- \sigma^+).
\end{equation}

%\subsection{Discrete optimization}

For every integer \(1\le x\le\lfloor n/2\rfloor\), direct calculation from
\cref{eq:fn-definition,eq:gn} gives
\begin{equation}\label{eq:f-minus-g}
  f_n(x)-g_n(x)=1-\frac{2x}{n}\ge0.
\end{equation}
The small-index estimate \cref{eq:gn}, the endpoint estimate
\cref{eq:endpoint}, and the large-index estimate
\cref{eq:high-energy} therefore imply
\begin{equation}\label{eq:global-f}
  \frac{\LE(G)}2
  \le
  \max_{1\le x\le\lfloor n/2\rfloor}f_n(x).
\end{equation}

The forward difference of \(f_n\) is
\begin{equation}\label{eq:Qn}
  n\bigl(f_n(x+1)-f_n(x)\bigr)
  =Q_n(x):=
  3x^2+(5-4n)x+n^2-2n.
\end{equation}
On \(1\le x\le n/2\), the polynomial \(Q_n\) is nonincreasing because
\[
  Q_n'(x)=6x+5-4n\le5-n\le0.
\]
Put \(x_0=\lceil n/3\rceil\).  The signs around \(x_0\) are explicit:
\begin{equation}\label{eq:sign-table}
\begin{array}{c|cc}
  n & Q_n(x_0-1) & Q_n(x_0)\\
  \midrule
  3t   & 5t-2 & -t\\
  3t+1 & t-1  & 3-5t\\
  3t+2 & 3t   & -3t
\end{array}
\end{equation}
For \(n>4\), the left entry is positive and the right entry is negative
in every row.  Thus \(f_n\) has its unique maximum at
\[
  x_0=\left\lceil\frac n3\right\rceil.
\]
Finally,
\[
  n-x_0+1
  =1+\left\lfloor\frac{2n}{3}\right\rfloor.
\]
Together with \cref{eq:global-f,eq:fn-definition}, this proves
\cref{thm:main}.
\end{proof}

\vspace{3mm}

\begin{corollary}\label{cor:exact-value}
The maximum Laplacian energy of
a connected graph of order \(n>4\) is
\begin{equation}\label{eq:piecewise-maximum}
\LE(\PA_{n,\,1+\lfloor2n/3\rfloor})=
\begin{cases}
\displaystyle
\frac{8}{27}n^2+\frac{2}{9}n+\frac23,
&n\equiv0\pmod3,\\[6pt]
\displaystyle
\frac{8}{27}n^2+\frac{2}{9}n+\frac23-\frac{32}{27n},
&n\equiv1\pmod3,\\[6pt]
\displaystyle
\frac{8}{27}n^2+\frac{2}{9}n+\frac89-\frac{28}{27n},
&n\equiv2\pmod3.
\end{cases}
\end{equation}
\end{corollary}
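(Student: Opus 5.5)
The corollary follows from \cref{thm:main}, which shows that for $n>4$ the maximum is attained by $\PA_{n,\omega}$ with $\omega=1+\lfloor 2n/3\rfloor$. So it only remains to evaluate $\LE(\PA_{n,\omega})$ explicitly. By \cref{lem:pineapple-data},
\[
\LE(\PA_{n,\omega})=2f_n(x_0),\qquad x_0=\left\lceil \tfrac n3\right\rceil,
\]
where
\[
f_n(x)=\frac{2x}{n}\left[\binom{n-x+1}{2}+x-1\right]-(x-1).
\]
Equivalently, one can use the Das--Mojallal cubic in $t=\omega-1=\lfloor 2n/3\rfloor$:
\[
\LE=-\frac{2t^3}{n}+\Bigl(2+\frac2n\Bigr)t^2+\Bigl(\frac4n-4\Bigr)t+2n-2 .
\]
I will split into the three residue classes of $n$ modulo $3$ and substitute in each.

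\textbf{Case $n=3q$.} Here $t=2q$. The cubic becomes
\[
-\frac{16q^3}{3q}+8q^2+\frac{8q^2}{3q}+\frac{8q}{3q}-8q+6q-2 .
\]
Collecting terms gives $\tfrac{8}{3}q^2+\tfrac{2}{3}q+\tfrac23$. Writing $q=n/3$ turns this into $\tfrac{8}{27}n^2+\tfrac29 n+\tfrac23$, which matches the first line of \eqref{eq:piecewise-maximum}.

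\textbf{Cases $n=3q+1$ and $n=3q+2$.} In these cases $t=2q$ and $t=2q+1$ respectively. I will substitute $q=(n-1)/3$ and $q=(n-2)/3$, clear the $1/n$ denominators, and reduce the result to the form $an^2+bn+c+e/n$.

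The only obstacle is bookkeeping the $1/n$ terms. The residual $-2t^3/n$ does not cancel exactly; it is what produces the $-32/(27n)$ and $-28/(27n)$ corrections.

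As a check, I will compute $2t^3$ modulo the polynomial part. For $n=3q+1$, $t=2(n-1)/3$, so
\[
2t^3=\frac{16(n-1)^3}{27}.
\]
Dividing by $n$ leaves the remainder $\tfrac{16(-1)^3}{27n}\cdot(-1)$ from $-2t^3/n$. The $2t^2/n$ and $4t/n$ terms contribute further remainders. These should combine to $-32/(27n)$. I will verify the $n=3q+2$ case the same way, and finally spot-check with $n=5$, where $\omega=4$ and $\Lspec=\{5,4,4,1,0\}$ with $m=7$, giving $\LE=2(13-3\cdot 2.8)=9.2$.
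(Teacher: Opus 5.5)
Your proposal is correct and follows the paper's route. The paper likewise substitutes $x=\lceil n/3\rceil$ into $f_n$ and splits by the residue of $n$ modulo $3$; your substitution into the Das--Mojallal cubic in $t=\lfloor 2n/3\rfloor$ is the same computation under the change of variables $x=n-t$. The $1/n$ corrections come from the three $1/n$ terms together, not from $-2t^3/n$ alone. They do check out: $(16+24-72)/(27n)=-32/(27n)$ for $n\equiv1$, and $(2+6-36)/(27n)=-28/(27n)$ for $n\equiv2$. The polynomial parts for those two classes, which you announce but do not write out, are routine.
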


\begin{proof}
Substitute \(x=\lceil n/3\rceil\) into
\cref{eq:fn-definition} and separate the three residue classes.
\end{proof}

\section{Equality and uniqueness}\label{sec:equality}

The proof of the maximum inequality requires only Brouwer's inequality.
For uniqueness, we additionally use the following equality
characterization, proved independently by Cai et al.\ and by Cui and
Chen \cite{CaiEtAl2026,CuiChen2026}: for \(1\le k\le n-1\),
\begin{equation}\label{eq:Brouwer-equality}
  S_k(G)=m+\binom{k+1}{2}
  \quad\Longleftrightarrow\quad
  G\text{ is a threshold graph with clique number }k+1.
\end{equation}

\begin{theorem}\label{thm:uniqueness}
Let \(G\) be a connected graph of
order \(n>4\). Then
\[
  \LE(G)=\LE(\PA_{n,\,1+\lfloor2n/3\rfloor})
  \quad\Longleftrightarrow\quad
  G\cong \PA_{n,\,1+\lfloor2n/3\rfloor}.
\]
\end{theorem}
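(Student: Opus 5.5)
The plan is to run the proof of \cref{thm:main} again and record where each inequality can be tight. Set \(x_0=\lceil n/3\rceil\) and \(k_0=n-x_0=\lfloor 2n/3\rfloor\). The implication ``\(\Leftarrow\)'' is immediate, since \(\PA_{n,k_0+1}\) is the comparison graph itself. For ``\(\Rightarrow\)'', suppose \(\LE(G)/2=f_n(x_0)\), and let \(\sigma^+\) be the index from \cref{lem:critical-index}. We will show that every value of \(\sigma^+\) other than \(k_0\) forces strict inequality. When \(\sigma^+=k_0\), we will show that equality pins down \(G\).

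\emph{Excluding \(\sigma^+\ne k_0\).} Recall from the proof of \cref{thm:main} that the sign table \cref{eq:sign-table} makes \(x_0\) the \emph{unique} maximizer of \(f_n\) on \(1\le x\le\lfloor n/2\rfloor\) when \(n>4\).
\begin{itemize}
\item If \(1\le\sigma^+\le n/2\), then by \cref{eq:gn} and \cref{eq:f-minus-g} we have \(\LE(G)/2\le g_n(\sigma^+)\le f_n(\sigma^+)\le f_n(x_0)\). For equality we need both \(\sigma^+=x_0\) and \(\sigma^+=n/2\), because the gap \(f_n(\sigma^+)-g_n(\sigma^+)=1-2\sigma^+/n\) vanishes only at \(\sigma^+=n/2\). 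But \(\lceil n/3\rceil<n/2\) for \(n>4\), so this case gives strict inequality.
\item If \(\sigma^+=n-1\), then \cref{eq:endpoint} gives \(\LE(G)/2\le f_n(1)\). Since \(x_0\ge2\) for \(n>4\), uniqueness of the maximizer gives \(f_n(1)<f_n(x_0)\).
\item If \(n/2<\sigma^+\le n-2\), then \cref{eq:high-energy} gives \(\LE(G)/2\le f_n(n-\sigma^+)\). Equality therefore forces \(n-\sigma^+=x_0\), that is, \(\sigma^+=k_0\). This value lies in the admissible range because \(n/2<\lfloor2n/3\rfloor\le n-2\) for \(n>4\).
\end{itemize}

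\emph{The case \(\sigma^+=k_0\).} Here equality must also hold in \cref{eq:CB} with \(k=k_0\), which means \(B=r-sd\) in the notation of the proof of \cref{thm:CB}. We check the three cases of that proof.
\begin{itemize}
\item If \(d\ge r/s\), then \(B>0\ge r-sd\), so the inequality is strict.
\item If \(1\le d<r/s\), then \cref{thm:tail} gives a strict inequality.
\item Hence \(d\le0\), and we have \(B\ge m-C=r-d\ge r-sd\). Because \(0<s<1\), the second inequality is an equality only when \(d=0\), i.e.\ \(m=M_{n,k_0}\). The first inequality must then be an equality as well, which says \(S_{k_0}(G)=m+\binom{k_0+1}{2}\).
\end{itemize}
By \cref{eq:Brouwer-equality}, \(G\) is therefore a threshold graph with clique number \(k_0+1\) and exactly \(\binom{k_0+1}{2}+(n-k_0-1)\) edges.

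\emph{Identifying the structure.} A connected threshold graph has a dominating vertex \(v\), which is adjacent to all other vertices. Every maximal clique contains \(v\), since \(v\) can always be added to it. So a maximum clique \(K\) of order \(k_0+1\) contains \(v\). The edges of \(G\) then split into three groups:
\begin{itemize}
\item the \(\binom{k_0+1}{2}\) edges inside \(K\);
\item the \(n-k_0-1\) edges from \(v\) to the vertices outside \(K\);
\item all remaining edges.
\end{itemize}
The first two groups already account for all \(m=M_{n,k_0}\) edges, so there are no other edges. Thus \(G\cong\PA_{n,k_0+1}\).

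The step needing most care is the bookkeeping of strictness. One must check that each case of the proof of \cref{thm:CB} other than \(d=0\) is genuinely strict, and that the borderline \(\sigma^+=n/2\) cannot coincide with \(x_0\) when \(n>4\). Both checks reduce to the elementary facts listed above.
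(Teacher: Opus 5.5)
Your proof is correct and follows the paper's argument essentially step for step. You exclude $\sigma^+\le n/2$ and $\sigma^+=n-1$ using the unique maximizer $x_0$ and the gap $1-2x/n$. You then run the same case-by-case strictness check from the proof of \cref{thm:CB}, forcing $d=0$ and equality in Brouwer's inequality, and finish with \cref{eq:Brouwer-equality}. The only difference is cosmetic: in the final identification you use the dominating vertex of a connected threshold graph plus an edge count, where the paper uses a split partition with nested neighborhoods; both work.
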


\begin{proof}
Suppose \(\LE(G)=\LE(\PA_{n,\,1+\lfloor2n/3\rfloor})\).  If \(\sigma^+\le n/2\), then
\(g_n(\sigma^+)<f_n(x_0)\).  Indeed, this follows from the uniqueness
of the maximizer \(x_0\) when \(\sigma^+\ne x_0\), while for
\(\sigma^+=x_0\) it follows from \cref{eq:f-minus-g} and
\(x_0<n/2\).  Similarly, \cref{eq:endpoint} and the uniqueness of
\(x_0\) exclude \(\sigma^+=n-1\).

Hence
\[
  \frac n2< \sigma^+ \le n-2,
  \qquad
  n-\sigma^+=\left\lceil\frac n3\right\rceil.
\]

Use the notation from the proof of \cref{thm:CB}, with \(k=\sigma^+\).
Every case in that proof is strict unless \(d=0\).  Indeed, if \(d<0\),
then \(r-d>r-sd\); if \(1\le d<r/s\), strictness follows from
\cref{thm:tail}; and if \(d\ge r/s\), then \(B>0\ge r-sd\).
Thus equality forces
\[
  m=M_{n,\sigma^+},
  \qquad
  S_{\sigma^+}(G)=m+\binom{\sigma^+ +1}{2}.
\]
By \cref{eq:Brouwer-equality}, \(G\) is a threshold graph with clique number
\(\sigma^+ +1\).

 \vspace{3mm}
 
Take a split partition \(V(G)=K\cup I\) with
\(|K|=\sigma^+ +1\).  Then
\[
  |I|=n-\sigma^+ -1,
  \qquad
  m=\binom{\sigma^+ +1}{2}+|I|.
\]
Connectedness forces every vertex of \(I\) to meet \(K\), so the edge
count forces every such vertex to have exactly one neighbor.  These
neighborhoods are nested in a threshold graph and hence coincide.
Therefore all vertices of \(I\) are pendant at one clique vertex, and
\[
  G\cong\PA_{n,{\sigma^+}+1}=\PA_{n,\,1+\lfloor2n/3\rfloor}.
\]
The converse is immediate, since Laplacian energy is invariant under
graph isomorphism.
\end{proof}

\begin{remark}\label{rem:n-four}
The restriction \(n>4\) is necessary for uniqueness, since
\(\PA_{4,3}\), \(K_4-e\), and \(K_4\) all have Laplacian energy \(6\).
\end{remark}

\section*{Acknowledgements}
Seyed Ahmad Mojallal is partially supported by the ERC Synergy grant (European Union, ERC, KARST, project number 101071836). 

\section*{Declaration of AI use}

The author acknowledges the use of ChatGPT (GPT-5.5, OpenAI; accessed May 2026) solely for preliminary brainstorming and the exploration of possible proof strategies. All mathematical statements and proofs were reviewed and verified by the author, who takes full responsibility for the accuracy
and integrity of the article.

\bibliographystyle{unsrtnat}
\bibliography{Ref1}

\bigskip
\begin{flushleft}
\small
Seyed Ahmad Mojallal\\
Department of Mathematics, Simon Fraser University, Burnaby, BC, Canada\\
Email: \texttt{seyed\_ahmad\_mojallal@sfu.ca},
\texttt{ahmad\_mojalal@yahoo.com}
\end{flushleft}

\end{document}